\newcounter{minutes}\setcounter{minutes}{\time}
\newcounter{hours}\setcounter{hours}{\time}
\newcommand{\ki}{\operatorname{Ki}}
\newcommand{\dt}{{\rm d}t}
\newcommand{\ds}{{\rm d}s}
\newcommand{\dy}{{\rm d}y}
\newcommand{\f}{\phi}
\title{Functional inequalities for the Bickley function}
\author{\'Arp\'ad Baricz}
\address{Department of Economics, Babe\c{s}-Bolyai University,
Cluj-Napoca 400591, Romania} \email{bariczocsi@yahoo.com}
\author{Tibor K. Pog\'any}
\address{Faculty of Maritime Studies, University of Rijeka, Rijeka 51000, Croatia}
\email{poganj@brod.pfri.hr}
\keywords{Bickley function, Tur\'an type inequality, Complete monotonicity, integral inequality, exponential convexity, log-convexity, geometrically concave function, Tur\'an determinant, modified Bessel function of the second kind.}
\subjclass[2010]{Primary 26A86, Secondary 39B62, 39B72.}
\newtheorem{theorem}{Theorem}
\begin{document}

\def\thefootnote{}
\footnotetext{ \texttt{File:~\jobname .tex,
          printed: \number\year-0\number\month-0\number\day,
          \thehours.\ifnum\theminutes<10{0}\fi\theminutes}
} \makeatletter\def\thefootnote{\@arabic\c@footnote}\makeatother

\maketitle
\allowdisplaybreaks

\begin{abstract}
In this paper our aim is to deduce some complete monotonicity properties and functional inequalities for the Bickley function. The key tools in our proofs are the classical integral inequalities, like Chebyshev, H\"older-Rogers, Cauchy-Schwarz, Carlson and Gr\"uss inequalities, as well as the monotone form of l'Hospital's rule. Moreover, we prove the complete monotonicity of a determinant function of which entries involve the Bickley  function.
\end{abstract}

\section{\bf Introduction}
\setcounter{equation}{0}

The Bessel function fractional integral
   $$\ki_{\alpha}(x)=\frac{1}{\Gamma(\alpha)}\int_x^{\infty}(t-x)^{\alpha-1}K_0(t)\dt,$$
where $K_0$ is the modified Bessel function of the second of zero order, was first introduced for $\alpha\in\{1,2,\dots\}$ by Bickley \cite{bickley} in connection with the solution of heat convection problems. This function appears also in neutron transport calculations,
and is frequently used in nuclear reactor computer codes. An alternative representation of the Bickley function, which will be used frequently in the sequel, is the following
   \begin{equation} \label{A0}
      \ki_{\alpha}(x) = \int_0^{\infty}e^{-x\cosh t}(\cosh t)^{-\alpha}\dt,
   \end{equation}
where $\alpha$ is an arbitrary real number and $x>0.$ For properties of the Bickley function, including asymptotic
expansions and generalizations we refer to \cite{amos1,amos2,blair,hanna}, \cite[Chapter 8]{luke}, \cite{milgram}, \cite[p. 259]{nist} and to the references therein.

In this paper, by using the classical integral inequalities, like
Chebyshev, H\"older-Rogers, Cauchy-Schwarz, Carlson and Gr\"uss, and the monotone form of l'Hospital's rule
we present some complete monotonicity properties and functional inequalities for the Bickley function. Moreover, we prove the complete monotonicity of a determinant function of which entries involve the Bickley function. For similar functional inequalities involving other special functions we refer for example to the papers \cite{kratzel,baricz} and to the references therein.

Before we present the main results of this paper we recall some definitions, which will be used in the sequel. A function
$f \colon (0,\infty)\rightarrow\mathbb{R}$ is said to be completely monotonic if $f$ has derivatives of all orders and satisfies
   $$(-1)^mf^{(m)}(x)\geq 0$$
for all $x>0$ and $m\in\{0,1,2,\dots\}.$

The exponentially convex functions form a sub-class of convex functions introduced by Bernstein in \cite{bernstein} (see also \cite{AJPR}). A function $g \colon \mathcal I \mapsto \mathbb R$ is exponentially convex on $\mathcal I \subseteq \mathbb R$ if it is continuous and
   \[ H_\xi(f) = \sum_{j, k = 1}^n \xi_j\xi_k f(x_j+x_k) \ge 0\, ,\]
for all $n\in\{1,2,\dots\}$ and all $\xi_j \in \mathbb R,$ $j\in\{1,2,\dots,n\}$ such that  $x_j + x_k \in \mathcal I$ for $j,k\in\{0,1,\dots,n\}.$

A function $h \colon (0,\infty)\to(0,\infty)$ is said to be logarithmically convex, or simply log-convex, if its natural logarithm $\ln h$ is convex, that is, for all $x,y>0$ and $\lambda\in[0,1]$ we have
   $$h(\lambda x+(1-\lambda)y) \leq \left[h(x)\right]^{\lambda}\left[h(y)\right]^{1-\lambda}.$$
A similar characterization of log-concave functions also holds. We also note that every completely monotonic function is log-convex, see
\cite[p. 167]{widder}. The same conclusion holds true for the exponentially convex functions on $(0,\infty)$, that is,
if $h:(0,\infty)\to(0,\infty)$ is exponentially convex, then it is log-convex. See \cite[Corollary 2]{AJPR} for more details.

By definition, a function $q \colon (0,\infty)\rightarrow(0,\infty)$ is said to be geometrically (or multiplicatively)
convex if it is convex with respect to the geometric mean, that is, if for all $x,y>0$ and all $\lambda\in[0,1]$ the inequality
   $$q(x^{\lambda}y^{1-\lambda}) \leq[q(x)]^{\lambda}[q(y)]^{1-\lambda}$$
holds. The function $q$ is called geometrically concave if the above inequality is reversed. Observe that, actually the
geometrical convexity of a function $q$ means that the function $\ln q$ is a convex function of $\ln x$ in
the usual sense. We also note that the differentiable function $h$ is log-convex (log-concave) if and only if
$x \mapsto h'(x)/h(x)$ is increasing (decreasing), while the differentiable function $q$ is geometrically convex (concave) if
and only if the function $x \mapsto xq'(x)/q(x)$ is increasing (decreasing). See for example \cite{geom} for more details.

Finally, let us recall the concept of relative convexity. This concept has been
considered by Hardy et al. \cite[p. 75]{hardy}: if
$\varphi,\psi:[a,b]\to \mathbb{R}$ are two continuous functions and
$\psi$ is strictly monotone, then we say that $\varphi$ is convex
(concave) with respect to $\psi$ if $\varphi\circ\psi^{-1}$ is convex (concave) in the
usual sense on the interval $\psi([a,b]).$ The usual convexity of a
function $\varphi$ in this manner means actually that the function $\varphi$ is
convex with respect to the identity function, the log-convexity of
$\varphi$ is exactly the fact that the function $\ln \varphi$ is convex with
respect to the identity function, while the geometrical convexity of
$\varphi$ means that $\ln \varphi$ is convex with respect to logarithm function.
See \cite{niculescu} for more details. It is also known (see
\cite{cargo}) that the increasing function $\varphi$ is convex with
respect to an increasing function $\psi$ if and only if the function
$\varphi'/\psi'$ is increasing, or if and only if the inequality $\psi''(x)/\psi'(x)\leq \varphi''(x)/\varphi'(x)$ is valid
for all $x\in (a,b).$

\section{\bf Bickley function: Monotonicity patterns and functional inequalities}
\setcounter{equation}{0}

Our first main result is the following theorem.

\begin{theorem}\label{th1}
The following assertions are true:
\begin{enumerate}
\item[\bf a.] The function $x\mapsto \ki_{\alpha}(x)$ is completely
monotonic on $(0,\infty)$ for all $\alpha\in\mathbb{R}.$
\item[\bf b.] The function $\alpha\mapsto \ki_{\alpha}(x)$ is completely
monotonic on $\mathbb{R}$ for all $x>0.$
\item[\bf c.] The function $\alpha\mapsto \ki_{\alpha}(x)$ is log-convex on $\mathbb{R}$ for all $x>0.$
\item[\bf d.] The function $x\mapsto \ki_{\alpha}(x)$ is log-convex on $(0,\infty)$ for all $\alpha \in \mathbb{R}.$
\item[\bf e.] The function $x\mapsto \ki_{\alpha}(x)$ is geometrically concave on $(0,\infty)$ for all $\alpha\in\{-1,0,1,\dots\}.$ Consequently, for all $\alpha\in\{-1,0,1,\dots\}$ and $x,y>0$ we have
    \begin{equation}\label{ineq5}
       \ki_{\alpha}(\sqrt{xy})\geq\sqrt{\ki_{\alpha}(x)\ki_{\alpha}(y)}\geq\ki_{\alpha}\left(\frac{x+y}{2}\right),
    \end{equation}
    \begin{equation}\label{tura}
    1\leq \frac{\ki_{\alpha}(x)\ki_{\alpha-2}(x)}{\left[\ki_{\alpha-1}(x)\right]^2}\leq 1+\frac{\ki_{\alpha}(x)}{x\ki_{\alpha-1}(x)}.
    \end{equation}
Moreover, the right-hand side of \eqref{ineq5} and the left-hand side of \eqref{tura} are valid for all real $\alpha.$
\item[\bf f.] The inequality
    \begin{equation}\label{ineq1}
       \ki_{-\beta}(x)\ki_{\alpha+\beta}(x)\leq \ki_0(x)\ki_{\alpha}(x)
    \end{equation}
is valid for all $x>0$ and $\alpha+\beta\leq0\leq\beta$ or $\alpha+\beta\geq0\geq\beta.$ If $\alpha\geq0,$ $\alpha+\beta\geq0$ or $\beta\leq0,$ $\alpha+\beta\leq0,$ then \eqref{ineq1} is reversed. In particular, when $\beta=-1$ and $\alpha$ is changed to $\alpha-1$ the inequality \eqref{ineq1} becomes
\begin{equation}\label{relat}\frac{\ki_{\alpha}''(x)}{\ki_{\alpha}'(x)}=-\frac{\ki_{\alpha-2}(x)}{\ki_{\alpha-1}(x)}\geq
-\frac{\ki_{0}(x)}{\ki_{1}(x)}=\frac{\ki_{2}''(x)}{\ki_{2}'(x)},\end{equation}
where $\alpha\geq2,$ i.e. $x\mapsto -\ki_{\alpha}(x)$ is convex with respect to $x\mapsto -\ki_{2}(x)$ for $\alpha\geq 2.$
\item[\bf g.] If $\alpha+\beta\geq0,$ $\beta\leq0$ and $x>0,$ then the following inequality is valid \begin{equation}\label{ineq2} \left|\ki_0(x)\ki_{\alpha}(x)-\ki_{-\beta}(x)\ki_{\alpha+\beta}(x)\right|\leq\frac{\left[\ki_{0}(x)\right]^2}{4}.\end{equation}
\item[\bf h.] The inequality
    \begin{equation}\label{ineq3}
       \frac{2\Gamma\left(\frac{\alpha+1}{2}\right)}{\sqrt{\pi}\,\Gamma\left(\frac{\alpha}{2}\right)}
       \ki_{\alpha}(x)\ki_{\alpha}(y)\leq\ki_{\alpha}(x+y)\leq\ki_{\alpha}(x)+\ki_{\alpha}(y) \leq
       \ki_\alpha(x+y)+ \frac{\sqrt{\pi}\,\Gamma\left(\frac{\alpha}{2}\right)}{2\Gamma\left(\frac{\alpha +1}{2}\right)}\, .
    \end{equation}
Moreover, if we let $r, s \ge 1,$ then for all $\alpha, x, y>0,$
    \begin{equation} \label{ineq31}
       r\ki_\alpha(x) + s\ki_\alpha(y) \leq \ki_\alpha(rx+sy) + (r+s-1)
            \frac{\sqrt{\pi}\,\Gamma\left(\frac{\alpha}{2}\right)}{2\Gamma\left(\frac{\alpha +1}{2}\right)}\, .
    \end{equation}
\item[\bf i.] The inequality
    \begin{equation}\label{ineq4}
       \frac{1}{\ki_0(x)}\ki_{\alpha}(x)\ki_{\beta}(x)\leq\ki_{\alpha+\beta}(x)\leq\ki_{\alpha}(x)+\ki_{\beta}(x)\leq \ki_0(x)+\ki_{\alpha+\beta}(x)
    \end{equation}
holds for all $\alpha,\beta>0$ and $x>0.$
\item[\bf j.] The function $x\mapsto \ki_{\alpha}(x)$ is exponentially convex on $(0,\infty)$ for all $\alpha>0.$
\item[\bf k.] The function $\alpha\mapsto \ki_{\alpha}(x)$ is exponentially convex on $\mathbb{R}$ for all $x>0.$
\item[\bf l.] For all $\alpha,\beta \in \mathbb R$ and $x>0,$
    \begin{equation} \label{ineqX}
       \ki_{\alpha+\beta}(x) + \ki_{\alpha-\beta}(x) \ge 2\ki_\alpha(x)\, .
    \end{equation}
\item[\bf m.] The inequality
    \begin{equation} \label{ineqXX}
       \ki_{\alpha+\nu}(x)\ki_{\alpha-\mu}(x) + \ki_{\alpha-\nu}(x)\ki_{\alpha+\mu}(x)
            \ge 2\left[\ki_\alpha(x)\right]^2
    \end{equation}
holds for all $\alpha,\nu, \mu \in \mathbb R$ and $x>0$.
\item[\bf n.] The function $(\alpha,x)\mapsto \ki_{\alpha}(x)$ is log-convex for all $x>0$ and $\alpha\in\mathbb{R}.$ In particular,
\begin{equation}\label{logc2}
\left[\ki_{\alpha}(x)\right]^2\leq \ki_{\alpha(1+\mu)}((1+\nu)x)\ki_{\alpha(1-\mu)}((1-\nu)x)
\end{equation}
is valid for all $\alpha,\nu,\mu\in\mathbb{R}$ and $x>0.$
\end{enumerate}
\end{theorem}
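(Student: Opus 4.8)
The plan is to prove the joint log-convexity of $(\alpha,x)\mapsto\ki_\alpha(x)$ directly from the integral representation \eqref{A0} by means of the H\"older-Rogers inequality, in the same spirit as the one-variable statements (c) and (d), but now regarding $(\alpha,x)$ as a single point in $\mathbb{R}\times(0,\infty)$. The key structural observation is that in \eqref{A0} the integrand can be written as $e^{-x\cosh t-\alpha\ln\cosh t}$, whose exponent is an \emph{affine} function of the pair $(\alpha,x)$. Integrating the exponential of an affine form against a fixed positive measure always produces a jointly log-convex function, and H\"older is the concrete device realizing this. Before applying it I would record that $\ki_\alpha(x)>0$ and that the defining integral converges for every $\alpha\in\mathbb{R}$ and $x>0$ (the factor $e^{-x\cosh t}$ decays faster than any power $(\cosh t)^{-\alpha}$ can grow), so that the logarithm is well defined.

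Concretely, fix two points $(\alpha_1,x_1),(\alpha_2,x_2)\in\mathbb{R}\times(0,\infty)$ and $\lambda\in[0,1]$, and set
$$f(t)=e^{-x_1\cosh t}(\cosh t)^{-\alpha_1},\qquad g(t)=e^{-x_2\cosh t}(\cosh t)^{-\alpha_2}.$$
Because the exponent is linear in $(\alpha,x)$, the integrand of $\ki_{\lambda\alpha_1+(1-\lambda)\alpha_2}(\lambda x_1+(1-\lambda)x_2)$ factors exactly as $[f(t)]^{\lambda}[g(t)]^{1-\lambda}$. Applying the H\"older-Rogers inequality with conjugate exponents $p=1/\lambda$ and $q=1/(1-\lambda)$ then gives
$$\ki_{\lambda\alpha_1+(1-\lambda)\alpha_2}(\lambda x_1+(1-\lambda)x_2)=\int_0^\infty [f(t)]^{\lambda}[g(t)]^{1-\lambda}\,\dt\le\left[\int_0^\infty f(t)\,\dt\right]^{\lambda}\left[\int_0^\infty g(t)\,\dt\right]^{1-\lambda},$$
which is precisely $\ki_{\lambda\alpha_1+(1-\lambda)\alpha_2}(\lambda x_1+(1-\lambda)x_2)\le[\ki_{\alpha_1}(x_1)]^{\lambda}[\ki_{\alpha_2}(x_2)]^{1-\lambda}$, i.e. the claimed joint log-convexity.

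For the specialization \eqref{logc2} I would apply the established log-convexity with $\lambda=\tfrac12$ to the two points $(\alpha(1+\mu),(1+\nu)x)$ and $(\alpha(1-\mu),(1-\nu)x)$, whose midpoint is exactly $(\alpha,x)$. Log-convexity then yields $\ki_\alpha(x)\le[\ki_{\alpha(1+\mu)}((1+\nu)x)]^{1/2}[\ki_{\alpha(1-\mu)}((1-\nu)x)]^{1/2}$, and squaring gives \eqref{logc2}. Here one only needs both chosen points to lie in the admissible region $\mathbb{R}\times(0,\infty)$, which amounts to $(1\pm\nu)x>0$.

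Honestly, I do not expect a serious obstacle: once the integrand is recognized as the exponential of a jointly affine form, the H\"older argument is routine and identical in spirit to the proofs underlying (c), (d), (j) and (k). The only points requiring genuine care are the positivity and convergence of the integral, needed to justify taking logarithms and to make H\"older applicable, and, in \eqref{logc2}, keeping track of which endpoints must remain in the domain so that the values of $\ki$ there are defined.
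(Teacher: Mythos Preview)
Your proposal is correct and follows essentially the same route as the paper: both apply the H\"older--Rogers inequality to the integral representation \eqref{A0}, exploiting that the integrand factors as $[f(t)]^\lambda[g(t)]^{1-\lambda}$ because the exponent is affine in $(\alpha,x)$, and both obtain \eqref{logc2} by taking $\lambda=\tfrac12$ at the points $(\alpha(1+\mu),(1+\nu)x)$ and $(\alpha(1-\mu),(1-\nu)x)$. Your explicit remarks on positivity, convergence, and the domain constraint $(1\pm\nu)x>0$ are welcome clarifications but do not change the argument.
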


\begin{proof}[\bf Proof]
{\bf a.} \& {\bf b.} It is known \cite[Theorem 4]{samko} that if the kernel $K(x,t)$ is completely monotonic in $x$ for all $t>0$ and $f$ is a nonnegative
locally integrable function such that the integral
$$\int_a^b\frac{\partial^n}{\partial x^n}K(x,t)f(t)\dt$$
converges uniformly for all $n\in\{0,1,2,\dots\}$ and $0\leq a<b\leq \infty$ in a neighborhood of any point $x>0,$ then the function
$$x\mapsto\int_a^bK(x,t)f(t)\dt$$ is completely monotonic on $(0,\infty).$ Now, since the function $x\mapsto e^{-x\cosh t}$ is completely monotonic on $(0,\infty)$ for all $t>0,$ the above result implies that indeed the function $x\mapsto \ki_{\alpha}(x)$ is completely
monotonic on $(0,\infty)$ for all $\alpha\in\mathbb{R}.$ Similarly, since the function $\alpha \mapsto (\cosh t)^{-\alpha}$ is completely monotonic on $\mathbb{R}$ for all $t>0,$ by using \cite[Theorem 4]{samko} again we obtain that the function $\alpha\mapsto \ki_{\alpha}(x)$ is completely
monotonic on $\mathbb{R}$ for all $x>0.$ It should be mentioned here that \begin{equation}\label{der}\ki'_{\alpha}(x)=-\ki_{\alpha-1}(x)\end{equation} and by induction we have $$(-1)^m\ki_{\alpha}^{(m)}(x)=\ki_{\alpha-m}(x)>0$$ for all $x>0,$ $\alpha\in\mathbb{R}$ and $m\in\{0,1,2,\dots\},$ which provides an alternative proof for part {\bf a}. Similarly,
$$(-1)^m\frac{\partial^m\ki_{\alpha}(x)}{\partial \alpha^m}=\int_0^{\infty}e^{-x\cosh t}(\cosh t)^{-\alpha}\left[\log(\cosh t)\right]^m\dt>0$$ for all $x>0,$ $\alpha\in\mathbb{R}$ and $m\in\{0,1,2,\dots\},$ which provides an alternative proof for part {\bf b}.

{\bf c.} \& {\bf d.} These results follow from parts {\bf a} \& {\bf b}, since every completely monotonic function is
log-convex (see \cite[p. 167]{widder}). However, we give here an alternative proof by using the classical H\"older-Rogers inequality for
integrals \cite[p. 54]{mitri},
   \begin{equation} \label{holder}
      \int_a^b|f(t)g(t)|\dt \leq {\left[\int_a^b|f(t)|^p\dt\right]}^{1/p}
           {\left[\int_a^b|g(t)|^q\dt\right]}^{1/q},
   \end{equation}
where $p>1,$ $1/p+1/q=1,$ $f$ and $g$ are real functions defined on $[a,b]$ and
$|f|^p,$ $|g|^q$ are integrable functions on $[a,b].$ Using \eqref{holder} we obtain that
\begin{align*}
\ki_{\lambda\alpha+(1-\lambda)\beta}(x)&=\int_0^{\infty}e^{-x\cosh t}(\cosh t)^{-(\lambda\alpha+(1-\lambda)\beta)}\dt\\
&=\int_0^{\infty}\left[e^{-x\cosh t}(\cosh t)^{-\alpha}\right]^{\lambda}\left[e^{-x\cosh t}(\cosh t)^{-\beta}\right]^{1-\lambda}\dt\\
&\leq\left[\int_0^{\infty}e^{-x\cosh t}(\cosh t)^{-\alpha}\dt\right]^{\lambda}\left[\int_0^{\infty}e^{-x\cosh t}(\cosh t)^{-\beta}\dt\right]^{1-\lambda}\\
&=\left[\ki_{\alpha}(x)\right]^{\lambda}\left[\ki_{\beta}(x)\right]^{1-\lambda}
\end{align*}
holds for all $\lambda\in[0,1],$ $\alpha,\beta\in\mathbb{R}$ and
$x>0,$ i.e. the function $\alpha\mapsto \ki_{\alpha}(x)$ is
log-convex on $\mathbb{R}.$ Similarly, by using \eqref{holder} we get
\begin{align*}
\ki_{\alpha}(\mu x+(1-\mu)y)&=\int_0^{\infty}e^{-(\mu x+(1-\mu)y)\cosh t}(\cosh t)^{-\alpha}\dt\\
&=\int_0^{\infty}\left[e^{-x\cosh t}(\cosh t)^{-\alpha}\right]^{\mu}\left[e^{-y\cosh t}(\cosh t)^{-\alpha}\right]^{1-\mu}\dt\\
&\leq \left[\int_0^{\infty}e^{-x\cosh t}(\cosh t)^{-\alpha}\dt\right]^{\mu}\left[\int_0^{\infty}e^{-y\cosh t}(\cosh t)^{-\alpha}\dt\right]^{1-\mu}\\
&=\left[\ki_{\alpha}(x)\right]^{\mu}\left[\ki_{\alpha}(y)\right]^{1-\mu}
\end{align*}
holds for all $\mu\in[0,1],$ $\alpha\in\mathbb{R}$ and
$x,y>0,$ i.e. the function $x\mapsto \ki_{\alpha}(x)$ is
log-convex on $(0,\infty).$

Alternatively, to prove part {\bf d} we may use part {\bf c} of this theorem. More
precisely, since the function $\alpha\mapsto \ki_{\alpha}(x)$ is log-convex, the following
Tur\'an type inequality holds for all
$\alpha_1,\alpha_2\in\mathbb{R}$ and $x>0$
\begin{equation}\label{turan}\left[\ki_{\frac{\alpha_1+\alpha_2}{2}}(x)\right]^2\leq
\ki_{\alpha_1}(x)\ki_{\alpha_2}(x).\end{equation} Now, if we
choose $\alpha_1=\alpha-2$ and $\alpha_2=\alpha$ and apply \eqref{der}, then we obtain
\begin{equation}\label{turanka}\left[\frac{\ki_{\alpha}'(x)}{\ki_{\alpha}(x)}\right]'=
\frac{\ki_{\alpha-2}(x)\ki_{\alpha}(x)-\left[\ki_{\alpha-1}(x)\right]^2}{\left[\ki_{\alpha}(x)\right]^2}\geq0,\end{equation}
i.e. the function $x\mapsto {\ki_{\alpha}'(x)}/{\ki_{\alpha}(x)}$ is increasing on
$(0,\infty)$ for all $\alpha\in\mathbb{R}.$

{\bf e.} To prove the asserted result, first we verify the following statement: For each real $\alpha$ if the function $\ki_{\alpha-1}$ is geometrically concave on $(0,\infty),$ then the function $\ki_{\alpha}$ is also geometrically concave on $(0,\infty).$ Since $\ki_{\alpha-1}$ is geometrically concave it follows that the function $$x\mapsto 1+\frac{x\ki_{\alpha-1}'(x)}{\ki_{\alpha-1}(x)}=\frac{\ki_{\alpha-1}(x)-x\ki_{\alpha-2}(x)}{\ki_{\alpha-1}(x)}=
-\frac{\left[x\ki_{\alpha-1}(x)\right]'}{\ki_{\alpha}'(x)}$$
is decreasing on $(0,\infty)$ and by the monotone form of l'Hospital's rule \cite[Lemma 2.2]{vuorinen} the function
$$x\mapsto -\frac{x\ki_{\alpha-1}(x)}{\ki_{\alpha}(x)}=\frac{x\ki_{\alpha}'(x)}{\ki_{\alpha}(x)}$$
is also decreasing on $(0,\infty),$ that is, the function $\ki_{\alpha}$ is geometrically concave on $(0,\infty).$ Here we used tacitly
that $x\ki_{\alpha-1}(x)$ and $\ki_{\alpha}(x)$ tend to zero as $x\to \infty.$ Now, because $\ki_0=K_0$ and $\ki_{-1}=-K_0'=K_1$ and
according to \cite[Theorem 2]{baricz} the function $K_{\alpha}$ is geometrically concave on $(0,\infty)$ for all $\alpha\in\mathbb{R},$ we
obtain that $\ki_{-1},$ $\ki_0,$ $\ki_1,$ $\ki_2,$ $\dots$ are geometrically concave on $(0,\infty).$

Now, we focus on the inequalities \eqref{ineq5} and \eqref{tura}. Inequality \eqref{ineq5} follows by definition. The left-hand
side of \eqref{tura} is a particular case of the Tur\'an type inequality \eqref{turan}, while the right-hand side of \eqref{tura} follows from the geometric concavity. More precisely, since $\ki_{\alpha}$ is geometrically concave, it follows that
$$\left[\frac{x\ki_{\alpha}'(x)}{\ki_{\alpha}(x)}\right]'=\left[\frac{\ki_{\alpha-1}(x)}{\ki_{\alpha}(x)}\right]^2
\left[-x-\frac{\ki_{\alpha}(x)}{\ki_{\alpha-1}(x)}+x\frac{\ki_{\alpha}(x)\ki_{\alpha-2}(x)}{\left[\ki_{\alpha-1}(x)\right]^2}\right]\leq0$$
for all $\alpha\in\{-1,0,1,\dots\}$ and $x>0.$

{\bf f.} We recall the Chebyshev integral inequality \cite[p.
40]{mitri}: If $f,g:[a,b]\rightarrow\mathbb{R}$ are integrable
functions, both increasing or both decreasing, and
$p:[a,b]\rightarrow\mathbb{R}$ is a positive integrable function,
then
\begin{equation}\label{csebisev}
\int_a^bp(t)f(t)\dt\int_a^bp(t)g(t)\dt\leq
\int_a^bp(t)\dt\int_a^bp(t)f(t)g(t)\dt.
\end{equation}
Note that if one of the functions $f$ or $g$ is decreasing and the
other is increasing, then \eqref{csebisev} is reversed. We shall use
this inequality. For this we write
$\ki_{\alpha}(x)$ as
$$\ki_{\alpha}(x)=\int_0^{\infty}e^{-x\cosh t}(\cosh t)^{\beta}(\cosh t)^{-(\alpha+\beta)}\dt$$
and let $p(t)=e^{-x\cosh t},$ $f(t)=(\cosh t)^{\beta}$ and $g(t)=(\cosh t)^{-(\alpha+\beta)}.$
The function $f$ is increasing (decreasing) on $(0,\infty)$ if and only if $\beta\geq0$ ($\beta\leq0$), while $g$ is increasing (decreasing) on $(0,\infty)$ if and only
if $\alpha+\beta\leq0$ ($\alpha+\beta\geq0$). Observe that
$$\int_0^{\infty}p(t)\dt=\int_0^{\infty}e^{-x\cosh t}\dt=\ki_{0}(x)=K_0(x),$$
$$\int_0^{\infty}p(t)f(t)\dt=\int_0^{\infty}e^{-x\cosh t}(\cosh t)^{\beta}\dt=\ki_{-\beta}(x)$$
and
$$\int_0^{\infty}p(t)g(t)\dt=\int_0^{\infty}e^{-x\cosh t}(\cosh t)^{-(\alpha+\beta)}\dt=\ki_{\alpha+\beta}(x).$$
Thus, appealing to Chebyshev integral inequality \eqref{csebisev}, the
proof of the inequality \eqref{ineq1} is complete.

Finally, if we consider the functions $\varphi,\psi:(0,\infty)\to\mathbb{R},$ defined by $\varphi(x)=-\ki_{\alpha}(x)$ and $\psi(x)=-\ki_2(x),$ then
by using the inequality \eqref{relat} we obtain that
$$\frac{\varphi''(x)}{\varphi'(x)}=\frac{\ki_{\alpha}''(x)}{\ki_{\alpha}'(x)}\geq \frac{\ki_{2}''(x)}{\ki_{2}'(x)}=\frac{\psi''(x)}{\psi'(x)}$$
for all $x>0$ and $\alpha\geq 2.$ In other words, the function $\varphi$ is convex with respect to $\psi$ on $(0,\infty)$ for $\alpha\geq 2.$

{\bf g.} Let us consider the following interpolation of the Gr\"uss inequality \cite{dragomir}: If the integrable functions
$f,g:[a,b]\to\mathbb{R}$ satisfies the inequalities $m_1\leq f(x)\leq M_1$ and $m_2\leq g(x)\leq M_2$ for all $x\in[a,b]$
and $p:[a,b]\to[0,\infty)$ is integrable such that $\int_a^bp(t)\dt>0,$ then
   \begin{align*}\left|\int_a^bp(t)\dt\right.&\left.\int_a^bp(t)f(t)g(t)\dt-\int_a^bp(t)f(t)\dt\int_a^bp(t)g(t)\dt\right|\\& \leq
           \frac{1}{4}(M_2-m_2)(M_1-m_1)\left[\int_a^bp(t)\dt\right]^2.\end{align*}
We use this inequality for the functions $f,g$ and $p$ as in the proof of part {\bf f}. Observe that when $\beta\leq0$ and $\alpha+\beta\geq0,$ then we have $0<f(t)<1$ and $0<g(t)<1$ for all $t>0.$

{\bf h.} \& {\bf i.} Owing to Kimberling \cite{kimberling} it is known that if the function $f,$ defined on $(0,\infty),$ is continuous
and completely monotonic and maps $(0,\infty)$ into $(0,1),$ then $\log f$ is super-additive, that is for all $x,y>0$ we have
   $$\log f(x+y)\geq \log f(x)+\log f(y)\ \ \mbox{or}\ \ f(x+y)\geq f(x)f(y).$$
In view of part {\bf a} the Bickley function $\ki$ is completely monotonic and so is $x\mapsto \ki_{\alpha}(x)/\ki_{\alpha}(0),$ which
maps  $(0,\infty)$ into $(0,1).$ Similarly, the function $\alpha\mapsto \ki_{\alpha}(x)/\ki_{0}(x)$ is completely monotonic on $(0,\infty),$
according to part {\bf b} of this theorem, and maps $(0,\infty)$ into $(0,1).$ Consequently, applying Kimberling's result, the proof of the left-hand side of the inequalities \eqref{ineq3} and \eqref{ineq4} is complete. Here we used that \cite[p. 259]{nist}
   \begin{equation}\label{alpha}
      \ki_{\alpha}(0) = \frac{\sqrt{\pi}\,\Gamma\left(\frac{\alpha}{2}\right)}{2\Gamma\left(\frac{\alpha +1}{2}\right)}
   \end{equation}
for all $\alpha>0.$

For the proof of the second inequalities in \eqref{ineq3} and \eqref{ineq4} recall the well-known fact that if for a function $g:(0,\infty)\to(0,\infty)$ we have that $x\mapsto g(x)/x$ is decreasing, then we have that $g$ is sub-additive, that is, for all $x,y>0$ one has $$g(x+y)\leq g(x)+g(y).$$ Now, both of functions $x\mapsto \ki_{\alpha}(x)/x$ and $\alpha\mapsto\ki_{\alpha}(x)/\alpha$ are decreasing on $(0,\infty),$ and hence the functions $x\mapsto \ki_{\alpha}(x)$ and $\alpha\mapsto\ki_{\alpha}(x)$ are sub-additive.

Now, we consider the proof of the last inequalities in \eqref{ineq3} and \eqref{ineq4}. In view of parts {\bf a} and {\bf b} the functions $x\mapsto \ki_{\alpha}'(x)$ and $\alpha\mapsto \partial \ki_{\alpha}(x)/\partial\alpha$ are increasing on $(0,\infty).$ Hence by using the monotone form of l'Hospital's rule \cite[Lemma 2.2]{vuorinen}, the functions $x\mapsto (\ki_{\alpha}(x)-\ki_{\alpha}(0))/x$ and $\alpha\mapsto (\ki_{\alpha}(x)-\ki_0(x))/\alpha$ are increasing too on $(0,\infty),$ which implies that the functions $x\mapsto \ki_{\alpha}(x)-\ki_{\alpha}(0)$ as well as $\alpha\mapsto \ki_{\alpha}(x)-\ki_0(x)$ are super-additive on $(0,\infty).$ Note that the last inequalities in \eqref{ineq3} and \eqref{ineq4} can proved also by using Petrovi\'c's result \cite[p. 22]{mitri}: if $f \colon [0,\infty) \mapsto \mathbb{R}$ is convex, then for all $x,y>0$ we have
   $$f(x)+f(y)\leq f(x+y)+f(0).$$

Finally, let us consider Vasi\'c's extension of Petrovi\'c inequality \cite{Vasic} which reads:
for a function $f$ convex on $[0,a]$, $x_j \in [0,a], p_j \geq 1, j\in\{1,2,\dots,n\}$; $\sum_{j=1}^n p_jx_j \in [0,a]$ there holds
   \[ \sum_{j=1}^n p_j f(x_j) \leq f\left( \sum_{j=1}^n p_jx_j\right) + \left( \sum_{j=1}^n p_j-1\right) f(0)\, .\]
Specifying $f=\ki_\alpha; n = 2, p_1=r, p_2=s; x_1 = x, x_2 = y$ and by the above exposed Vasi\'c's result we deduce \eqref{ineq31}. Let us point out that $r=s=1$ in \eqref{ineq31} gives the right-hand side inequality in \eqref{ineq3}.

{\bf j.} By the definition of exponential convexity and by using the left-hand side of \eqref{ineq3} we conclude
\begin{align*} H_\xi({\rm Ki}_\alpha) &=   \sum_{j, k = 1}^n \xi_j\xi_k {\rm Ki}_\alpha(x_j+x_k)\\
                             &\ge \frac{2\Gamma\left(\frac{\alpha+1}{2}\right)}{\sqrt{\pi}\,\Gamma\left(\frac{\alpha}{2}\right)}
                                 \sum_{j, k = 1}^n \xi_j\xi_k {\rm Ki}_\alpha(x_j){\rm Ki}_\alpha(x_k)\\
                             &=   \frac{2\Gamma\left(\frac{\alpha+1}{2}\right)}{\sqrt{\pi}\,\Gamma\left(\frac{\alpha}{2}\right)}
                                 \left[ \sum_{j= 1}^n \xi_j {\rm Ki}_\alpha(x_j)\right]^2>0\, , \end{align*}
where $n\in\{1,2,\dots\}$ and $\xi_j\in\mathbb{R},$ $x_j>0$ for $j\in\{0,1,\dots,n\}.$ Thus, $\ki_\alpha$ is exponentially convex on $(0,\infty)$ for $\alpha>0$. Now, because the exponential convexity implies log-convexity, we proved part {\bf d} for $\alpha>0$ as well.

{\bf k.} Similarly, by using the left-hand side of \eqref{ineq4} we conclude
\begin{align*} H_\eta({\rm Ki}_\alpha) &=   \sum_{j, k = 1}^n \eta_j\eta_k {\rm Ki}_{\alpha_j+\alpha_k}(x)\\
                             &\ge \frac{1}{{\rm Ki}_0(x)}
                                 \sum_{j, k = 1}^n \eta_j\eta_k {\rm Ki}_{\alpha_j}(x){\rm Ki}_{\alpha_k}(x)\\
                             &=  \frac{1}{{\rm Ki}_0(x)}
                                 \left[ \sum_{j= 1}^n \eta_j {\rm Ki}_{\alpha_j}(x)\right]^2>0\, , \end{align*}
where $n\in\{1,2,\dots\}$ and $\eta_j,\alpha_j\in\mathbb{R}$ for each $j\in\{0,1,\dots,n\}.$ Consequently, $\alpha\mapsto \ki_\alpha(x)$ is exponentially convex on $\mathbb{R}$ for $x>0$. Since the exponential convexity implies log-convexity, we proved part {\bf c} as well.

{\bf l.} \& {\bf m.} Employing the inequality $x + 1/x \ge 2$ we conclude \eqref{ineqX}. Indeed,
making use of the integral form \eqref{A0} of $\ki_\alpha(x)$ we have
   \begin{align*}
      \ki_{\alpha+\beta}(x) + \ki_{\alpha-\beta}(x) &= \int_0^{\infty}e^{-x\cosh t}(\cosh t)^{-\alpha}\left[(\cosh t)^\beta
                          + (\cosh t)^{-\beta}\right]\dt \\
                        & \ge 2 \int_0^{\infty}e^{-x\cosh t}(\cosh t)^{-\alpha}\dt = 2\, \ki_\alpha(x)\, .
   \end{align*}
Repeating this procedure to the left--hand side expression in \eqref{ineqXX} we get
   \begin{align*}
      \ki_{\alpha+\nu}(x) \cdot \ki_{\alpha-\mu}(x) &+ \ki_{\alpha-\nu}(x)\cdot \ki_{\alpha+\mu}(x) =
                  \int_0^\infty \int_0^\infty e^{-x(\cosh t+\cosh s)}(\cosh t\cosh s)^{-\alpha} \\
           &\times\, \left[(\cosh t)^\nu (\cosh s)^{-\mu} + (\cosh t)^{-\nu}(\cosh s)^\mu \right] \dt \ds \\
           &\ge 2 \int_0^\infty \int_0^\infty e^{-x(\cosh t+\cosh s)}(\cosh t\cosh s)^{-\alpha} \dt\ds
            = 2\left[\ki_\alpha(x)\right]^2\, ,
   \end{align*}
which finishes the proof of {\bf l.} It should be mentioned here that inequality \eqref{ineqX} is actually a consequence of part {\bf b} or {\bf c}. More precisely, since $\alpha \mapsto K_{\alpha}(x)$ is convex on $\mathbb{R}$ for all $x>0,$ we have
$$\ki_{\lambda\nu+(1-\lambda)\mu}(x)\leq \lambda\ki_{\nu}(x)+(1-\lambda)\ki_{\mu}(x)$$
for all $\nu,\mu\in\mathbb{R}$ and $x>0.$ Now, choosing $\nu=\alpha+\beta,$ $\mu=\alpha-\beta$ and $\lambda=1/2,$ we get \eqref{ineqX}.

{\bf n.} By using \eqref{A0} and the H\"older-Rogers inequality \eqref{holder} we obtain
\begin{align*}
\ki_{\lambda\alpha+(1-\lambda)\beta}(\lambda x+(1-\lambda)y)&=\int_0^{\infty}e^{-(\lambda x+(1-\lambda)y)\cosh t}(\cosh t)^{-(\lambda\alpha+(1-\lambda)\beta)}\dt\\
&=\int_0^{\infty}\left[e^{-x\cosh t}(\cosh t)^{-\alpha}\right]^{\lambda}\left[e^{-y\cosh t}(\cosh t)^{-\beta}\right]^{1-\lambda}\dt\\
&\leq\left[\int_0^{\infty}e^{-x\cosh t}(\cosh t)^{-\alpha}\dt\right]^{\lambda}\left[\int_0^{\infty}e^{-y\cosh t}(\cosh t)^{-\beta}\dt\right]^{1-\lambda}\\
&=\left[\ki_{\alpha}(x)\right]^{\lambda}\left[\ki_{\beta}(y)\right]^{1-\lambda}
\end{align*}
holds for all $\lambda\in[0,1],$ $\alpha,\beta\in\mathbb{R}$ and
$x,y>0,$ i.e. the function $(\alpha,x)\mapsto \ki_{\alpha}(x)$ is
log-convex. Now, by choosing in the above inequality $\lambda=1/2,$ and changing $\alpha$ to $(1+\mu)\alpha,$ $\beta$ to $(1-\mu)\alpha,$ $x$ to $(1+\nu)x$ and $y$ to $(1-\nu)x,$ we obtain the inequality \eqref{logc2}. We note that inequality \eqref{logc2} also follows from the generalized Cauchy-Schwarz inequality \cite{jamei1,jamei2}
$$\left[\int_a^bf(t)g(t)\dt\right]^2\leq\int_a^b\left[f(t)\right]^{1+\nu}\left[g(t)\right]^{1+\mu}\dt\int_a^b\left[f(t)\right]^{1-\nu}\left[g(t)\right]^{1-\mu}\dt,$$
where $\nu,\mu\in\mathbb{R}$ and $f,g:[a,b]\to\mathbb{R}$ are integrable functions such that the above integrals exist.
\end{proof}

The next theorem contains some other functional inequalities for the Bickley function.

\begin{theorem}
\begin{enumerate}
\item[\bf a.] For all $\alpha>1/4$ and $x>0$ the following inequality holds
   \begin{equation} \label{kibound}
      \ki_{\alpha}(x)\leq\dfrac{\sqrt{\pi}\, e^{-x}\, \Gamma(\alpha - \frac14)}{2\Gamma(\alpha)\, x^{1/4}}.
   \end{equation}
\item[\bf b.] The inequality
   \begin{equation}\label{partial}
      \frac{1}{2}\left[\ki_{\alpha+1}(x)-\ki_{\alpha-1}(x)\right]
         < \frac{\partial \ki_{\alpha}(x)}{\partial\alpha} <
           \frac{1}{2}\left[\ki_{\alpha+2}(x)-\ki_{\alpha}(x)\right]
   \end{equation}
is valid for all $\alpha\in\mathbb{R}$ and $x>0.$
\item[\bf c.] The inequality
   \begin{equation}\label{carl}
      \left[\ki_{\alpha}(x)\right]^4\leq \frac{\pi^2}{2}\ki_{2\alpha}(2x)\ki_{2\alpha-2}(2x)
   \end{equation}
holds for all $\alpha\in\mathbb{R}$ and $x>0.$
\item[\bf d.] If $\alpha>0$ and $x>0,$ then
   \begin{equation}\label{oruljneki}\ki_{\alpha}(x) \leq \frac{\sqrt{\pi}\alpha^{\alpha}\Gamma(\alpha)}{2(ex)^{\alpha}\Gamma\left(\alpha+\frac{1}{2}\right)}.\end{equation}
\item[\bf e.] Let $p, q$ be conjugated H\"older exponents, $1/p+1/q = 1,$ $\min\{p,q\}>1$. Then for all $\alpha>0$ and $x>0$
the inequality
   \begin{align} \label{bilateral}
      \left[\ki_\alpha(0) - x\ki_{\alpha-1}(0)\right]_+ \leq \ki_\alpha(x)
          &\leq \left[ K_0(xp)\right]^{1/p}\,
                \left[\ki_{\alpha q}(0)\right]^{1/q} \nonumber \\
          &\leq \dfrac{\sqrt{\pi}}{2^{1/q+1/(2p)}\,p^{1/(2p)}} \,
                \left[\frac{\Gamma\left(\frac{\alpha q}{2}\right)}{\Gamma\left(\frac{\alpha q+1}{2}\right)}\right]^{1/q}
                \cdot \dfrac{e^{-x}}{x^{1/(2p)}}\, .
   \end{align}
holds. Here $[A]_+ = \max\{A, 0\}$.
\end{enumerate}
\end{theorem}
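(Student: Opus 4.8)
The plan is to handle each part by reducing the stated estimate to one of the classical integral inequalities applied to the representation \eqref{A0}, combined with a single elementary pointwise bound on $\cosh t$ or $\log\cosh t$, and then to recognize the resulting ``boundary'' integrals as values $\ki_{\beta}(0)$ evaluated through \eqref{alpha}.

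For the bound \eqref{kibound} I would first peel off the exponential decay via the sharp elementary inequality $\cosh t\ge 1+\tfrac{t^2}{2},$ which gives $e^{-x\cosh t}\le e^{-x}e^{-xt^2/2}$ and hence $\ki_\alpha(x)\le e^{-x}\int_0^\infty e^{-xt^2/2}(\cosh t)^{-\alpha}\dt.$ Applying Cauchy--Schwarz (the case $p=q=2$ of \eqref{holder}) to separate the Gaussian factor from $(\cosh t)^{-\alpha}$ produces $(\int_0^\infty e^{-xt^2}\dt)^{1/2}=(\tfrac12\sqrt{\pi/x})^{1/2}$ together with $(\int_0^\infty(\cosh t)^{-2\alpha}\dt)^{1/2}=[\ki_{2\alpha}(0)]^{1/2},$ and \eqref{alpha} turns the latter into $[\tfrac{\sqrt\pi\,\Gamma(\alpha)}{2\Gamma(\alpha+1/2)}]^{1/2}.$ Collecting constants yields exactly $\tfrac{\sqrt\pi}{2}e^{-x}x^{-1/4}\sqrt{\Gamma(\alpha)/\Gamma(\alpha+\tfrac12)},$ which carries the wrong Gamma-quotient; the passage to the stated (slightly weaker, but cleaner) factor $\Gamma(\alpha-\tfrac14)/\Gamma(\alpha)$ is precisely the log-convexity of the Gamma function, namely the Jensen inequality $\ln\Gamma(\alpha)\le\tfrac23\ln\Gamma(\alpha-\tfrac14)+\tfrac13\ln\Gamma(\alpha+\tfrac12),$ valid for $\alpha>\tfrac14.$ I expect this reconciliation of Gamma-quotients to be the one genuinely non-mechanical point in this part.

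The inequality \eqref{partial} is the most transparent: differentiating \eqref{A0} under the integral sign gives $\partial_\alpha\ki_\alpha(x)=-\int_0^\infty e^{-x\cosh t}(\cosh t)^{-\alpha}\log(\cosh t)\dt,$ so \eqref{partial} follows once one establishes, for $w=\cosh t>1,$ the sandwich $\tfrac12(w^{-1}-w)<-\log w<\tfrac12(w^{-2}-1).$ Each half is a one-variable check (the auxiliary functions vanish at $w=1$ and have a sign-definite derivative for $w>1$), and integrating against the positive weight $e^{-x\cosh t}(\cosh t)^{-\alpha}$ converts the two sides into $\tfrac12[\ki_{\alpha+1}-\ki_{\alpha-1}]$ and $\tfrac12[\ki_{\alpha+2}-\ki_\alpha],$ strictness coming from $w>1$ on a set of positive measure. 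The bound \eqref{oruljneki} is equally direct: writing $e^{-x\cosh t}(\cosh t)^{-\alpha}=[(\cosh t)^{\alpha}e^{-x\cosh t}]\,(\cosh t)^{-2\alpha}$ and using $\sup_{u>0}u^\alpha e^{-xu}=(\alpha/(ex))^\alpha$ pulls that maximum out and leaves exactly $\ki_{2\alpha}(0),$ which \eqref{alpha} converts into the right-hand side. For \eqref{carl} I would apply Carlson's inequality $(\int_0^\infty f)^4\le\pi^2\int_0^\infty f^2\int_0^\infty t^2f^2$ to $f(t)=e^{-x\cosh t}(\cosh t)^{-\alpha};$ then $\int_0^\infty f^2=\ki_{2\alpha}(2x),$ while the weighted integral is controlled by $\int_0^\infty t^2f^2\le\tfrac12\int_0^\infty(\cosh t)^2f^2=\tfrac12\ki_{2\alpha-2}(2x),$ the factor $\tfrac12$ being exactly the (sharp) elementary inequality $t^2\le\tfrac12\cosh^2 t,$ i.e.\ $\cosh t\ge\sqrt2\,t;$ its verification (the maximum of $t/\cosh t$ is below $1/\sqrt2$) is the small obstacle responsible for the constant $\pi^2/2.$

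Finally, the chain \eqref{bilateral} splits into three links. The lower bound uses convexity of $x\mapsto\ki_\alpha(x)$ (part a of Theorem~\ref{th1}) together with $\ki_\alpha'(0)=-\ki_{\alpha-1}(0)$ from \eqref{der}: the tangent line at $0$ gives $\ki_\alpha(x)\ge\ki_\alpha(0)-x\ki_{\alpha-1}(0),$ and combining with positivity of $\ki_\alpha$ yields the $[\,\cdot\,]_+$ (the case $0<\alpha\le1,$ where $\ki_{\alpha-1}(0)=\infty,$ being trivial since the bracket is then $0$). The middle link is H\"older's inequality \eqref{holder} applied to the factorization $e^{-x\cosh t}\cdot(\cosh t)^{-\alpha}$ with conjugate exponents $p,q,$ giving $[\int_0^\infty e^{-px\cosh t}\dt]^{1/p}[\int_0^\infty(\cosh t)^{-\alpha q}\dt]^{1/q}=[K_0(px)]^{1/p}[\ki_{\alpha q}(0)]^{1/q}.$ The last link combines \eqref{alpha} for $\ki_{\alpha q}(0)$ with the uniform estimate $K_0(px)\le\sqrt{\pi/(2px)}\,e^{-px}$ (again a consequence of $\cosh t\ge1+\tfrac{t^2}{2}$), after which the relation $1/p+1/q=1$ collapses the $\pi$- and $2$-powers to precisely the stated constant. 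The only care needed here is bookkeeping of the exponents. Overall the entire theorem rests on the three elementary estimates $\cosh t\ge1+\tfrac{t^2}{2},$ $\cosh t\ge\sqrt2\,t,$ and the logarithmic sandwich above; I regard matching the exact multiplicative constants (the Gamma-quotient in \eqref{kibound} and the $\pi^2/2$ in \eqref{carl}) as the main obstacle, the inequalities themselves being routine applications of the named classical results.
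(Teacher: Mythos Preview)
Your proposal is correct throughout. For parts {\bf c}, {\bf d} and {\bf e} you reproduce exactly the paper's argument (Carlson's inequality plus $\cosh t\ge\sqrt2\,t$; the pointwise maximum $u^{\alpha}e^{-xu}\le(\alpha/(ex))^{\alpha}$; and $e^{-a}\ge1-a$ / H\"older / $\cosh t\ge1+t^2/2$, respectively). For part {\bf b} you end up with precisely the same pointwise bounds the paper uses, but you get there more directly: instead of quoting the inequality $2\tanh^2(t/2)<\log\cosh t<\tfrac12\sinh t\tanh t$ and then weakening the left side via $\tanh(t/2)\ge\tfrac12\tanh t$, you verify the equivalent sandwich $\tfrac12(1-w^{-2})<\log w<\tfrac12(w-w^{-1})$ by a one-variable derivative check. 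That is a legitimate simplification.

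Part {\bf a} is where you genuinely diverge from the paper. The paper inserts the representation $(\cosh t)^{-\alpha}=\Gamma(\alpha)^{-1}\int_0^\infty e^{-y\cosh t}y^{\alpha-1}\dy$, interchanges the order of integration, applies $\cosh t\ge1+t^2/2$ to the resulting Gaussian, and then uses the AM--GM estimate $\sqrt{x+y}\ge\sqrt2\,(xy)^{1/4}$ on $\int_0^\infty e^{-y}y^{\alpha-1}(x+y)^{-1/2}\dy$; the factor $\Gamma(\alpha-\tfrac14)$ drops out naturally from the remaining $y$-integral. Your route (peel off $e^{-x}$, Cauchy--Schwarz, then log-convexity of $\Gamma$ to pass from $\sqrt{\Gamma(\alpha)/\Gamma(\alpha+\tfrac12)}$ to $\Gamma(\alpha-\tfrac14)/\Gamma(\alpha)$) is equally valid and in fact yields the sharper intermediate bound $\ki_\alpha(x)\le\tfrac{\sqrt\pi}{2}\,e^{-x}x^{-1/4}\sqrt{\Gamma(\alpha)/\Gamma(\alpha+\tfrac12)}$ before weakening. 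The paper's approach has the advantage that the stated $\Gamma$-quotient appears without any extra step; yours avoids the double integral and isolates the single elementary inequality $\cosh t\ge1+t^2/2$ as the sole analytic input. Either way the constants match.
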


\begin{proof}[\bf Proof]
{\bf a.} Let us recall the familiar formula for the gamma function \cite[p. 139]{nist}
   \[ \dfrac{\Gamma(\alpha)}{z^\alpha} = \int_0^\infty e^{-zy}y^{\alpha-1} \dy, \]
where $z,\alpha>0.$ Putting $z = \cosh t$ for a $t \in (0, \infty)$ we get
   \[ (\cosh t)^{-\alpha} = \dfrac1{\Gamma(\alpha)} \int_0^\infty e^{-y\cosh t}y^{\alpha-1} \dy\, .\]
This together with the integral form \eqref{A0} of $\ki_\alpha(x)$ yields the double integral representation
   \[ \ki_\alpha(x) = \int_0^\infty e^{-x\cosh t} (\cosh t)^{-\alpha} \dt
                    = \dfrac1{\Gamma(\alpha)} \int_0^\infty \int_0^\infty e^{-(x+y)\cosh t}\,y^{\alpha-1} \dt\dy\, .\]
Since the integrand is positive and $\cosh t \ge 1+t^2/2$ for all $t\in \mathbb R$, it follows that
   \[ \ki_\alpha(x) \le \dfrac1{\Gamma(\alpha)} \int_0^\infty\int_0^\infty e^{-(x+y)\left(1+{t^2}/2\right)}\,y^{\alpha-1} \dt\dy
                     =   \dfrac{e^{-x}}{\Gamma(\alpha)}\int_0^\infty e^{-y}y^{\alpha-1}
                         \left( \int_0^\infty e^{-\frac{x+y}2\, t^2}\, \dt\right)\, \dy =R.\]
The integration order exchange and the variable substitution $t\sqrt{(x+y)/2} \mapsto s$ lead to
   \[ R = \dfrac{\sqrt{2}\, e^{-x}}{\Gamma(\alpha)} \int_0^\infty \dfrac{e^{-y}y^{\alpha-1}}{\sqrt{x+y}}
                 \left(\int_0^\infty e^{-s^2}\, \ds\right) \dy
        = \sqrt{\dfrac\pi2}\,\dfrac{e^{-x}}{\Gamma(\alpha)}\, \int_0^\infty \dfrac{e^{-y}y^{\alpha-1}}{\sqrt{x+y}}\, \dy\,.\]
Applying the arithmetic mean - geometric mean inequality to the denominator of the integrand, we get
    \[ R \le \dfrac{\sqrt{\pi}\, e^{-x}}{2\Gamma(\alpha)\, x^{1/4}}\, \int_0^\infty e^{-y}y^{\alpha-1/4-1}\dy
                     =   \dfrac{\sqrt{\pi}\, e^{-x}\, \Gamma(\alpha - \frac14)}{2\Gamma(\alpha)\, x^{1/4}}\, ,\]
which makes sense for all $\alpha>\tfrac14$. This completes the proof of \eqref{kibound}.

{\bf b.} To prove \eqref{partial} observe that by \eqref{A0} we have
$$\frac{\partial \ki_{\alpha}(x)}{\partial\alpha}=-\int_0^{\infty}e^{-x\cosh t}\log(\cosh t)(\cosh t)^{-\alpha}\dt.$$
On the other hand, it is known \cite[Theorem 3.3]{neuman} that for all $t\in\mathbb{R}$ the inequality
$$2\left(\tanh \frac{t}{2}\right)^2<\log(\cosh t)<\frac{\sinh t\tanh t}{2}$$
is valid. Applying this inequality together with
$$\tanh \frac{t}{2}=\frac{\sinh t}{\cosh t+1}\geq\frac{\tanh t}{2},$$
where $t\in\mathbb{R},$ we obtain that for all $\alpha\in\mathbb{R}$ and $x>0$
\begin{align*}
\frac{\partial \ki_{\alpha}(x)}{\partial\alpha}&>-\frac{1}{2}\int_0^{\infty}e^{-x\cosh t}(\sinh^2t)(\cosh t)^{-(\alpha+1)}\dt\\
&=\frac{1}{2}\int_0^{\infty}e^{-x\cosh t}(1-\cosh^2t)(\cosh t)^{-(\alpha+1)}\dt\\
&=\frac{1}{2}\left(\int_0^{\infty}e^{-x\cosh t}(\cosh t)^{-(\alpha+1)}\dt-\int_0^{\infty}e^{-x\cosh t}(\cosh t)^{-(\alpha-1)}\dt\right)\\
&=\frac{1}{2}\left[\ki_{\alpha+1}(x)-\ki_{\alpha-1}(x)\right]
\end{align*}
and
\begin{align*}
\frac{\partial \ki_{\alpha}(x)}{\partial\alpha}&<-2\int_0^{\infty}e^{-x\cosh t}\left(\tanh \frac{t}{2}\right)^2(\cosh t)^{-(\alpha+1)}\dt\\
&\leq-\frac{1}{2}\int_0^{\infty}e^{-x\cosh t}(\tanh^2t)(\cosh t)^{-\alpha}\dt\\
&=\frac{1}{2}\int_0^{\infty}e^{-x\cosh t}(1-\cosh^2t)(\cosh t)^{-(\alpha+2)}\dt\\
&=\frac{1}{2}\left(\int_0^{\infty}e^{-x\cosh t}(\cosh t)^{-(\alpha+2)}\dt-\int_0^{\infty}e^{-x\cosh t}(\cosh t)^{-\alpha}\dt\right)\\
&=\frac{1}{2}\left[\ki_{\alpha+2}(x)-\ki_{\alpha}(x)\right].
\end{align*}

{\bf c.} We shall apply Carlson's inequality \cite[p. 370]{mitri} which states that if for the function $f:[0,\infty)\to(0,\infty)$ the functions
$x\mapsto \left[f(x)\right]^2$ and $x\mapsto \left[xf(x)\right]^2$ are integrable on $[0,\infty),$ then
$$\left[\int_{0}^{\infty}f(t)\dt\right]^4\leq \pi^2\left[\int_{0}^{\infty}[f(t)]^2\dt\right]\left[\int_{0}^{\infty}[tf(t)]^2\dt\right].$$
Thus we obtain that for all $\alpha\in\mathbb{R}$ and $x>0$
$$\left[\int_0^{\infty}e^{-x\cosh t}(\cosh t)^{-\alpha}\dt\right]^4\leq\pi^2\left[\int_0^{\infty}e^{-2x\cosh t}(\cosh t)^{-2\alpha}\dt\right]\left[\int_0^{\infty}t^2e^{-2x\cosh t}(\cosh t)^{-2\alpha}\dt\right].$$
Applying again the inequality $\cosh t\geq 1+t^2/2\geq\sqrt{2}t,$ we get $\cosh^2t\geq 2t^2,$ for all $t\geq 0.$ Consequently,
$$\int_0^{\infty}t^2e^{-2x\cosh t}(\cosh t)^{-2\alpha}\dt\leq\frac{1}{2}\int_0^{\infty}e^{-2x\cosh t}(\cosh t)^{-(2\alpha-2)}\dt.$$
Now, using the representation \eqref{A0}, we complete the proof of \eqref{carl}.

{\bf d.} Applying the inequality \cite[p. 266]{mitri} $e^{-y}\leq (a/e)^ay^{-a},$ where $y>0$ and $a>0,$ for $y=x\cosh t$ and $a=\alpha,$ we obtain
$$\ki_{\alpha}(x)=\int_0^{\infty}e^{-x\cosh t}(\cosh t)^{-\alpha}\dt\leq (\alpha/e)^{\alpha}x^{-\alpha}\int_{0}^{\infty}(\cosh t)^{-2\alpha}\dt=(\alpha/e)^{\alpha}x^{-\alpha}\ki_{2\alpha}(0).$$
Thus, in view of \eqref{alpha}, the inequality \eqref{oruljneki} follows.

{\bf e.} Applying the estimate $e^{-a} \geq 1-a$ with $a= x \cosh t$ in \eqref{A0} we obtain
   \[ \ki_\alpha(x) \geq \int_0^\infty (1-x \cosh t) (\cosh t)^{-\alpha} \dt = \ki_\alpha(0) - x\ki_{\alpha-1}(0).\]
This proves the left-hand side of \eqref{bilateral}. Now, let $p, q$ with $\min\{p,q\}>1$ be conjugated H\"older exponents. Then by the H\"older--Rogers inequality \eqref{holder} we conclude
   \begin{align*}
      \ki_\alpha(x) &= \int_0^\infty e^{-x \cosh t} \cdot (\cosh t)^{-\alpha} \dt \\
         &\le \left[ \int_0^\infty e^{-xp \cosh t} \dt\right]^{1/p}\, \left[ \int_0^\infty  (\cosh t)^{-\alpha q} \dt\right]^{1/q} \\
         &= \big[ \ki_0(xp)\big]^{1/p} \cdot \big[ \ki_{\alpha q}(0)\big]^{1/q}
          = \big[ K_0(xp)\big]^{1/p}\, \left[\frac{\sqrt{\pi}\,\Gamma\left(\frac{\alpha q}{2}\right)}
            {2\Gamma\left(\frac{\alpha q+1}{2}\right)}\right]^{1/q} \, .
    \end{align*}
On the other hand, since $\cosh t\geq 1+t^2/2,$
    \[ \ki_0(xp) \le \int_0^\infty e^{-xp(1+t^2/2)} \dt = \dfrac{\sqrt{\pi}\, e^{-xp}}{\sqrt{2xp}}\, ,\]
which completes the proof of Theorem 2.
\end{proof}

Finally, observe that the Tur\'an type inequality \eqref{turanka} can be deduced also from the representation
$$\ki_{\alpha-2}(x)\ki_{\alpha}(x)-\left[\ki_{\alpha-1}(x)\right]^2=\frac{1}{2}\int_0^{\infty}\int_0^{\infty}e^{-x(\cosh t+\cosh s)}(\cosh t\cosh s)^{-\alpha}(\cosh t-\cosh s)^2\dt\ds.$$
Moreover, the above integral representation yields the following complete monotonicity result: the function
$$x\mapsto\left|\begin{array}{cc}\ki_{\alpha}(x)&\ki_{\alpha-1}(x)\\\ki_{\alpha-1}(x)&\ki_{\alpha-2}(x)\end{array}\right|$$
is not only positive, but also completely monotonic on $(0,\infty)$ for all $\alpha\in\mathbb{R}.$ The next result generalizes this property of the Bickley function concerning Tur\'an determinants.

\begin{theorem}
If $\alpha\in\mathbb{R}$ and $n\in\{1,2,\dots\},$ then the function
$$x\mapsto \mathrm{Det}_{\ki}(x)=\left|\begin{array}{cccc}\ki_{\alpha}(x)&\ki_{\alpha-1}(x)&\cdots&\ki_{\alpha-n}(x)\\
\ki_{\alpha-1}(x)&\ki_{\alpha-2}(x)&\cdots&\ki_{\alpha-n-1}(x)\\\vdots&\vdots&&\vdots\\
\ki_{\alpha-n}(x)&\ki_{\alpha-n-1}(x)&\cdots&\ki_{\alpha-2n}(x)
\end{array}\right|$$
is completely monotonic on $(0,\infty).$
\end{theorem}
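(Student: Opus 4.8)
The plan is to recognize the matrix underlying $\mathrm{Det}_{\ki}(x)$ as a Hankel moment matrix and to linearize its determinant through the classical Heine--Andr\'eief identity, after which complete monotonicity falls out by differentiating under the integral sign. First I would use the integral representation \eqref{A0} to note that, for $i,j\in\{0,1,\dots,n\}$, the $(i,j)$ entry $\ki_{\alpha-i-j}(x)$ can be written as
$$\ki_{\alpha-i-j}(x)=\int_0^\infty (\cosh t)^{i+j}\,d\mu_x(t),\qquad d\mu_x(t):=e^{-x\cosh t}(\cosh t)^{-\alpha}\dt,$$
where, for each fixed $x>0$ and each real $\alpha$, the exponential decay of $e^{-x\cosh t}$ makes $\mu_x$ a positive finite measure on $(0,\infty)$. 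Hence $\mathrm{Det}_{\ki}(x)=\det\big(m_{i+j}(x)\big)_{i,j=0}^n$ is precisely the Hankel determinant of the moment sequence $m_k(x)=\int_0^\infty(\cosh t)^k\,d\mu_x(t)$.

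Next I would invoke the Andr\'eief (Heine) identity $\det\!\big(\int f_ig_j\,d\mu\big)_{i,j=0}^n=\frac{1}{(n+1)!}\int\!\cdots\!\int\det[f_i(t_k)]\det[g_j(t_k)]\prod_k d\mu(t_k)$ with the choice $f_i(t)=g_i(t)=(\cosh t)^i$. Since $\det\big((\cosh t_k)^i\big)_{i,k=0}^n$ is a Vandermonde determinant equal to $\prod_{0\le i<j\le n}(\cosh t_j-\cosh t_i)$, the product of the two determinants is the square $\prod_{0\le i<j\le n}(\cosh t_i-\cosh t_j)^2$, and the identity gives the linearized representation
$$\mathrm{Det}_{\ki}(x)=\frac{1}{(n+1)!}\int_0^\infty\!\!\cdots\!\int_0^\infty\prod_{0\le i<j\le n}(\cosh t_i-\cosh t_j)^2\prod_{k=0}^n e^{-x\cosh t_k}(\cosh t_k)^{-\alpha}\,\dt_0\cdots\dt_n.$$
(For $n=1$ this reproduces exactly the two-fold integral for the $2\times 2$ Tur\'an determinant displayed just before the statement, which is a reassuring consistency check.)

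Finally, writing $S=\cosh t_0+\cdots+\cosh t_n\ge 0$, the entire $x$-dependence of the integrand sits in the factor $e^{-xS}$, which is completely monotonic in $x$. Differentiating under the integral sign $m$ times yields
$$(-1)^m\,\mathrm{Det}_{\ki}^{(m)}(x)=\frac{1}{(n+1)!}\int_0^\infty\!\!\cdots\!\int_0^\infty S^m\,e^{-xS}\prod_{0\le i<j\le n}(\cosh t_i-\cosh t_j)^2\prod_{k=0}^n(\cosh t_k)^{-\alpha}\,\dt_0\cdots\dt_n\ge 0,$$
the integrand being manifestly nonnegative; equivalently, $\mathrm{Det}_{\ki}$ is a nonnegative superposition of the completely monotonic functions $x\mapsto e^{-xS}$ and is therefore itself completely monotonic (and in fact strictly positive, since the integrand is positive off a null set). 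I expect the only genuine technical point to be the justification of differentiation under the integral — equivalently, verifying the uniform-convergence hypothesis of \cite[Theorem 4]{samko} in this multivariate setting — but this is handled exactly as in parts \textbf{a} and \textbf{b}: because $\cosh t_k\ge 1$, the factor $S^m e^{-xS}$ is dominated locally uniformly in $x$ by an integrable majorant, so the interchange is legitimate. The algebraic heart of the argument, and the step that does all the work, is the Heine--Andr\'eief linearization, which converts a determinant of a priori unknown sign into a transparently nonnegative integral.
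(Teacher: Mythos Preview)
Your proposal is correct and follows essentially the same route as the paper: both arrive at the identical multiple-integral representation
\[
\mathrm{Det}_{\ki}(x)=\frac{1}{(n+1)!}\int_{(0,\infty)^{n+1}}\exp\Big(-x\sum_{j}\cosh t_j\Big)\prod_{j<k}(\cosh t_j-\cosh t_k)^2\prod_{j}(\cosh t_j)^{-\alpha}\,dt_j,
\]
from which complete monotonicity is immediate. The only cosmetic difference is that the paper obtains this formula by quoting \cite[Theorem~5]{tricomi}, whereas you derive it directly from the Heine--Andr\'eief identity (which is precisely what underlies that theorem); your version also spells out the justification for differentiating under the integral, which the paper leaves tacit.
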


\begin{proof}[\bf Proof]
Recently, Baricz and Ismail \cite[Theorem 5]{tricomi} proved the following result (see also \cite[Remark 2.9]{ismail}): If the sequence of functions $\{f_n\}_{n\geq0}$ is of the form
$$f_n(x) = \int_{\alpha}^{\beta} [\f(t,x)]^n d\mu(t,x),$$ where
$\f,\mu:[\alpha,\beta]\times\mathbb{R}\to\mathbb{R}$ and $\alpha,\beta\in\mathbb{R}$ such that $\alpha<\beta,$ then the determinant
$$\mathrm{Det}_n(x)=\left| \begin{array}{cccc}
f_0(x) & f_{1}(x) & \cdots & f_{n}(x)\\
f_{1}(x) & f_{2}(x) & \cdots & f_{n+1}(x)\\
\vdots & \vdots &  & \vdots\\
f_{n}(x) & f_{n+1}(x) & \cdots & f_{2n}(x)\\
\end{array} \right|$$
can be rewritten as follows
$$
\mathrm{Det}_n(x) = \frac{1}{(n+1)!} \int_{[\alpha,\beta]^{n+1}}
\prod_{0 \le j < k \le n} [\f(t_j,x) - \f(t_k,x)]^2 \prod_{j=0}^n
d\mu(t_j,x).
$$
Applying this result for the sequence of functions $\{\ki_{\alpha-n}\}_{n\geq0}$ we obtain that
$$\mathrm{Det}_{\ki}(x)=\frac{1}{(n+1)!}\int_{(0,\infty)^{n+1}}\exp\left(-x\sum_{j=0}^n\cosh t_j\right)\prod_{0\leq j<k\leq n}(\cosh t_j-\cosh t_k)^2\prod_{j=0}^n(\cosh t_j)^{-\alpha}dt_j,$$
which shows that indeed the function $x\mapsto \mathrm{Det}_{\ki}(x)$ is completely monotonic on $(0,\infty)$ for all $x>0.$
\end{proof}

\subsection*{Acknowledgement} The authors are very grateful to the eagle-eyed referee for his/her several constructive comments and suggestions, which improved the quality of the paper.

\end{document}